\newtheorem{theorem}{Theorem}
\newtheorem{lemma}{Lemma}
\newtheorem{corollary}{Corollary}
\newtheorem{definition}{Definition\rm}
\newtheorem{remark}{Remark}
\theoremstyle{definition}
\newtheorem{example}{Example}
\renewcommand{\AA}{{\mathcal A}}
\newcommand{\FF}{{\mathcal F}}
\newcommand{\IE}{{\mathrm{I\!E}}}
\newcommand{\LL}{{\mathcal L}}
\newcommand{\HH}{{\mathcal H}}
\newcommand{\co}{\mathop{\mathrm{co}}}
\newcommand{\EssOsc}{\mathop{\mathrm{EssOsc}}}
\begin{document}


\title{Remark on norm compactness in $L^p(\mu,X)$.\footnote{This is a preliminary version.}}
\author{Y. Askoura}

\maketitle

{Lemma, Universit\'e Paris II, 
4 rue Blaise Desgoffe, 75006 Paris.}

{youcef.askoura@u-paris2.fr}

\begin{abstract} We prove a compactness criterion in $L^p(\mu,X)$: a subset of $L^p(\mu,X)$ is relatively norm compact iff the set of integrals of its functions over any measurable set is relatively norm compact, it satisfies the Fr\'echet oscillation restriction condition and it is $p$-uniformly integrable. The proof is elementary. 
\end{abstract}



\section{Introduction}
Relative norm compactness in spaces of Lebesgue-Bochner integrable functions is characterized by tightness together with a concentration condition (reduction of oscillations) in \cite{SIM87,AUB63,LIO61} for functions defined on an interval $[0,\; T]$. \citet{RoS03} provides some generalization of these results steel for functions defined on intervals.  Note that in \cite{SIM87} the tightness is expressed by assuming that some integrals must belong to some compact set.  The general results we know, proved by \citet{DiM99} (see \citet{NEE14,NEE07} for a different proof and \cite{BGJ94} for the particular case $L^1(\mu,X)$), characterize norm compactness through tightness, scalar relative compactness and the $p$-uniform integrability. The scalar relative compactness is obtained by Bocce criterion~: a reduction of oscillations. 

\medskip
Thereafter, we adopt an ``integral" tightness. Instead of assuming tightness, that is~: except on an arbitrarily small measurable set, the values of the considered functions belong uniformly to a compact set,  we assume that their integrals over any measurable set belong to a compact set. We prove a compactness criterion as described in the abstract.  Comparatively to \citet{DiM99} results, the Bocce criterion is replaced by the Fr\'echet one, by modifying slightly the tightness notion used in \citet{DiM99}.

\section{Main results}

Let $(\Omega,\AA,\mu)$ be a finite measure space, where $\AA$ is a $\sigma$-algebra and $\mu$ a countably additive positive and finite measure. The set $\AA^+$ refers to the set of measurable subsets $E\in \AA$ with $\mu(E)>0$. We do not make a difference between two measurable sets with a $\mu$-null symmetric measure. 

Consider a Banach space $X$ normed by $\|\cdot\|$.  Denote by $L^p(\mu, X)$, $1\leq p<+\infty$, the Lebesgue-Bochner space consisting of all equivalence classes of  (strongly) $\mu$-measurable \citep{DiU77}, $\mu$-a.e. equal functions $f$ defined from $\Omega$ to $X$ such that $\|f\|^p$ is $\mu$-integrable. The usual norm in $L^p(\mu, X)$ is denoted by $\|\cdot\|_p$. The set $\Pi$ consisting in all finite partitions $\pi\subset \AA^+$ of $\Omega$ is directed by refinement.  Given $\pi\in \Pi$, define on $L^1(\mu, X)$ the conditional expectation operator 
$$\IE_\pi(f)(\cdot)=\underset{K\in \pi}\sum \frac{1}{\mu(K)}\int_K f d\mu\chi_K(\cdot),$$ where $\chi_K(\cdot)$ stands for the characteristic function of $K$. 

Recall that a subset $\HH\subset L^p(\mu,X)$, $1\leq p<+\infty$,  is said to be $p$-uniformly integrable iff, the set $\{\|f\|^p : f\in\HH\}$ is uniformly integrable. That is, 
$$\underset{M\rightarrow +\infty}{\lim}\int_{\|f\|^p>M} \|f\|^pd\mu=0,\text{ uniformly in }f\in \HH.$$ 
Equivalently, $\HH$ is bounded in $ L_p(\mu,X)$ and $\underset{\mu(E)\rightarrow 0}{\lim}\int_E \|f\|^pd\mu=0$, uniformly in $f\in \HH$. 
For $p>1$ and a finite $\mu$, it results straightforwardly from H\"older's inequality that a $p$-uniformly integrable set is ($1$-)uniformly integrable. 

\begin{definition}\label{IntT}
A subset $\HH\subset L^p(\mu,X)$ is said to be \emph{integral tight} iff, for every $E\in \AA,\left\{\int_E f d\mu : f\in \HH\right\}$ is relatively norm compact.
\end{definition}
The following lemma is a straightforward vector valued version of Riesz Theorem. Its proof is a slight modification of the classical one \citep{BOG07} by adding the integral tightness condition.

\begin{lemma}\label{RieszV} A subset $\HH$ of $L^p(\mu,X)$, $1\leq p< +\infty$, is relatively norm compact iff, 
\begin{itemize}
\item[1)] $\HH$ is integral tight and, 
\item[2)] $\underset{\pi}{\lim}\; \underset{f\in \HH}{\sup} \|\IE_\pi (f)-f\|_p=0.$
\end{itemize}
\end{lemma}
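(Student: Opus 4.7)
The plan is to prove both directions by exploiting the fact that $\IE_\pi$ is a contraction on $L^p(\mu,X)$. First I would verify this: by Jensen / Hölder applied on each atom $K\in \pi$,
$$\left\|\tfrac{1}{\mu(K)}\textstyle\int_K f\,d\mu\right\|^p\le \tfrac{1}{\mu(K)}\int_K\|f\|^p d\mu,$$
so $\|\IE_\pi(f)\|_p\le \|f\|_p$ for all $1\le p<\infty$. This will be used throughout.

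For the forward implication, take a relatively norm compact $\HH$. Integral tightness is immediate because, for any $E\in\AA$, the map $f\mapsto \int_E f\,d\mu$ is continuous linear from $L^p(\mu,X)$ to $X$ (bounded by $\mu(E)^{1-1/p}\|f\|_p$ thanks to the finiteness of $\mu$), so it sends relatively compact sets to relatively compact ones. For condition (2), I would first handle a single $f\in L^p(\mu,X)$: approximate $f$ in $L^p$ by a simple function $g=\sum_{i=1}^n c_i\chi_{A_i}$; if $\pi$ refines $\{A_1,\dots,A_n\}$ then each $K\in\pi$ lies in some $A_i$, so $\IE_\pi(g)=g$; the contraction property then yields $\|\IE_\pi(f)-f\|_p\le 2\|f-g\|_p$, hence $\lim_\pi\|\IE_\pi(f)-f\|_p=0$. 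To get uniformity over $\HH$, I cover the compact closure $\overline{\HH}$ by finitely many balls $B(f_j,\varepsilon/3)$, $j=1,\dots,N$, pick partitions $\pi_j$ realising the single-function limit at each $f_j$ up to $\varepsilon/3$, and take $\pi_0$ as their common refinement; a standard $3\varepsilon$ argument combined with the contraction property gives $\|\IE_\pi(f)-f\|_p<\varepsilon$ for every $f\in\HH$ and every $\pi$ refining $\pi_0$.

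For the converse, the key observation is that for each fixed $\pi\in\Pi$ the set $\IE_\pi(\HH)$ is relatively norm compact. Indeed
$$\IE_\pi(f)=\sum_{K\in\pi}\tfrac{1}{\mu(K)}\left(\int_K f\,d\mu\right)\chi_K,$$
and by integral tightness each coordinate set $\{\int_K f\,d\mu : f\in\HH\}$ is relatively compact in $X$; since $\pi$ is finite, $\IE_\pi(\HH)$ is the continuous image (via $(x_K)_K\mapsto\sum_K x_K\chi_K$) of a product of finitely many relatively compact sets, hence relatively compact in $L^p(\mu,X)$. Then, given $\varepsilon>0$, hypothesis (2) furnishes $\pi$ with $\sup_{f\in\HH}\|\IE_\pi(f)-f\|_p<\varepsilon/2$; covering the totally bounded set $\IE_\pi(\HH)$ by finitely many $\varepsilon/2$-balls produces a finite $\varepsilon$-net for $\HH$. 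Thus $\HH$ is totally bounded, and since $L^p(\mu,X)$ is complete, it is relatively norm compact.

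The main obstacle I anticipate is purely bookkeeping: handling the net limit along $\Pi$ (directed by refinement) carefully enough in the forward direction so that a single $\pi_0$ works simultaneously for all members of a finite $\varepsilon$-net of $\overline{\HH}$. Apart from this, every step reduces either to the contraction estimate or to the elementary fact that $\IE_\pi$ stabilises simple functions once $\pi$ is fine enough.
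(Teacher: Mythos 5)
Your proof is correct and follows essentially the same route as the paper: the contraction property of $\IE_\pi$ plus continuity of $f\mapsto\int_E f\,d\mu$ for the forward direction, and relative compactness of $\IE_\pi(\HH)$ as a finite "product" of the integral-tight coordinate sets for the converse. The only cosmetic difference is that you establish $\lim_\pi\|\IE_\pi(f)-f\|_p=0$ for a single function first and then boost to uniformity over a finite net, whereas the paper folds the simple-function approximation of the net centers directly into one $\varepsilon/4$ estimate.
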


\begin{proof} $\Rightarrow $) Let $A\in \AA^+$ and $f\in L^p(\mu,X)$. Remark using H\"older's inequality that 
$$\left(\int_A \|f\|d\mu\right)^p\leq \mu(A)^{p-1}\int_A\|f\|^p d\mu.$$ This provides
\begin{itemize}
\item[(a)] $\| \IE_\pi(f)\|_p\leq \|f\|_p$, and 
\item[(b)] the operator 
$$\begin{array}{rl}
\Lambda_A:L^p(\mu,X)&\rightarrow X\\
f&\mapsto \int_A f d\mu
\end{array}
$$
 is continuous for the norm topologies. 
\end{itemize}
Observe that 1) results immediately from (b).  For 2), let $\varepsilon>0$ and consider a finite covering of $\HH$ by open balls $B(f_i,\varepsilon/4)$ of radius $\varepsilon/4$. Since the set of simple functions is dense in $L^p(\mu,X)$, consider for every $i$ a simple function $\varphi_i$ such that $\|f_i-\varphi_i\|_p\leq \varepsilon/4$. Let $\pi\subset \AA^+$ a finite partition of $\Omega$ such that every $\varphi_i$ is constant on every element of $\pi$. Observe that $\IE_\pi(\varphi_i)=\varphi_i$ for all $i$.  Then, for every $f\in \HH$, there is some $i$ such that

$$\begin{array}{rl}
\|\IE_\pi (f)-f\|_p &\leq \|\IE_\pi (f)-\varphi_i\|_p+\|\varphi_i-f_i\|_p+\|f_i-f\|_p\\
&\leq \|\IE_\pi (f-\varphi_i)\|_p+\varepsilon/2\\
&\leq \|f-\varphi_i\|_p+\varepsilon/2\\
&\leq \varepsilon.
\end{array}
$$
It is clear that this formula remains valid for every $\pi'\geq \pi$. Then 2) is true as well. 

\medskip 
$\Leftarrow)$ Conversely, 1) implies that for every $\pi\in \Pi$, $\IE_\pi(\HH)$ is relatively compact as homeomorphic to a relatively compact subset of the product of $\Lambda_A(\HH), A\in \pi$, consisting in relatively compact sets. From 2), for every $\varepsilon>0$, there is $\pi\in \Pi$ such that $\underset{f\in \HH}{\sup} \|\IE_\pi (f)-f\|_p\leq \varepsilon/3$. Let $B(\IE_\pi(f_i), \varepsilon/3), i=1,..,n$ a finite covering of $\IE_\pi(\HH)$, by open balls of radius $\varepsilon/3$. That is, for every $f\in \HH$, $\|\IE_\pi(f)-\IE_\pi(f_i)\|\leq \varepsilon/3$ for some $i\in\{1,...,n\}$. Then, $\HH$ is covered by a finite number of open balls $B(f_i,\varepsilon), i=1,..,n$ of radius $\varepsilon$. Hence it is totally bounded. 
\end{proof}

Condition 2) above is a concentration or an oscillation reduction condition. Recall that the essential oscillation of a function $f$ on a set $A$ is~:   

$$\EssOsc (f,A)=\inf\{\sup \|f(\omega)-f(\omega')\| : \omega,\omega'\in A\setminus B : \mu(B)=0\}.$$

\begin{definition}[\citep{SaV95,DiM99}]A subset $\HH$ of $\mu$-measurable functions satisfies
\emph{Fr\'echet oscillation condition  ($\FF$)} iff, for every $\varepsilon>0$, there is a finite partition $\pi\in \Pi$, there is a set $\Omega_f\in \AA$ for every $f\in \HH$, such that $\mu(\Omega\setminus \Omega_f)\leq \varepsilon$, and $\EssOsc(f, \Omega_f\cap A)\leq \varepsilon$ for all $A\in \pi$. 
\end{definition}
 
Let us denote the mean of a function $f\in L_1(\mu,X)$ on $A\in \AA^+$ by $m_A(f)=\frac{1}{\mu(A)}\int_Af\;d\mu$. Then, a simple application of the mean value Theorem for Bochner integral \citep{DiU77}, allows to see that for every $A\in \AA^+$, and $f\in L_1(\mu,X)$, 
$$\|f(\omega)-m_A(f)\|\leq \EssOsc(f,A), \mu\text{-a.e. on  }A.$$

Indeed, $m_A(f)=\frac{1}{\mu(A)}\int_Af\;d\mu\in \overline{\co} f(A\setminus B)$, for all $B\subset A$, $B\in \AA$ with $\mu(B)=0$. Let $\varepsilon>0$ be fixed. For every $B\subset A$, $B\in \AA$, $\mu(B)=0$, there is a finite convex combination $\sum \alpha_k f(w'_k)$, $w'_k\in A\setminus B$, such that 
$$\forall\omega\in \Omega, \|f(\omega)-m_A(f)\|\leq \|f(\omega)-\sum \alpha_k f(w'_k)\|+\varepsilon\leq \max_k\|f(\omega)-f(w'_k)\|+\varepsilon.$$
Let $B_\varepsilon\subset A$, $B_\varepsilon\in \AA$ with $\mu(B_\varepsilon)=0$,  such that 
$$\EssOsc(f,A)> \sup\{\|f(\omega)-f(\omega')\| : \omega,\omega'\in A\setminus B_\varepsilon\}-\varepsilon.$$ Hence, for all $\omega\in A\setminus B_\varepsilon$, $\|f(\omega)-m_A(f)\|\leq \EssOsc(f,A)+2\varepsilon$. By considering a sequence $\varepsilon_n$ decreasing to $0$, the inequality holds on $A\setminus \cup B_{\varepsilon_n}$.

\begin{theorem}\label{VecV}
A set $\HH\subset L^p(\mu,X)$ is relatively norm compact iff, it satisfies the Fr\'echet oscillation condition ($\FF$), it is integral tight and $p$-uniformly integrable. 
\end{theorem}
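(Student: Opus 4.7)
My plan is to reduce the theorem to Lemma~\ref{RieszV}: integral tightness appears on both sides, so the real task is to prove that in the presence of integral tightness, the Fr\'echet oscillation condition together with $p$-uniform integrability is equivalent to condition 2) of the Lemma, namely $\lim_\pi\sup_{f\in\HH}\|\IE_\pi(f)-f\|_p=0$.

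For the necessity direction, I would derive $p$-uniform integrability by covering $\HH$ with finitely many $L^p$-balls $B(f_i,\delta)$ and using that the finite set $\{\|f_i\|^p\}$ is uniformly integrable, the triangle inequality propagating this to $\HH$. For the Fr\'echet condition, I would approximate each $f_i$ by a simple $\varphi_i$ with $\|f_i-\varphi_i\|_p\le\delta$, take $\pi\in\Pi$ making every $\varphi_i$ constant on each cell, and for $f\in\HH$ nearest to a fixed $\varphi_i$ set $\Omega_f=\{\omega:\|f(\omega)-\varphi_i(\omega)\|\le\varepsilon\}$. Markov's inequality controls $\mu(\Omega\setminus\Omega_f)$ by $(2\delta/\varepsilon)^p$, and the constancy of $\varphi_i$ on each $A\in\pi$ forces $\EssOsc(f,\Omega_f\cap A)\le 2\varepsilon$.

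For the sufficiency direction, given $\varepsilon>0$ I invoke Fr\'echet with a parameter $\eta$ to be chosen later, obtaining $\pi$ and sets $\Omega_f$ with $\mu(\Omega\setminus\Omega_f)\le\eta$ and $\EssOsc(f,\Omega_f\cap A)\le\eta$ for each $A\in\pi$. Writing $m_A(f)-f(\omega)=m_A(f(\cdot)-f(\omega))$ and applying Jensen to the convex function $\|\cdot\|^p$ gives
\[\|m_A(f)-f(\omega)\|^p\le\frac{1}{\mu(A)}\int_A\|f(\omega')-f(\omega)\|^p\,d\mu(\omega'),\]
so integrating in $\omega$ and summing over $\pi$ yields
\[\|\IE_\pi(f)-f\|_p^p\le\sum_{A\in\pi}\frac{1}{\mu(A)}\int_A\int_A\|f(\omega')-f(\omega)\|^p\,d\mu(\omega')\,d\mu(\omega).\]
I split each product $A\times A$ into $(\Omega_f\cap A)^2$, the two symmetric crosses, and $(A\setminus\Omega_f)^2$. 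The first piece is bounded by $\eta^p\mu(A\cap\Omega_f)$ and sums to $\eta^p\mu(\Omega)$. The remaining pieces are controlled via $\|a-b\|^p\le 2^{p-1}(\|a\|^p+\|b\|^p)$, reducing to $\int_{\Omega\setminus\Omega_f}\|f\|^p\,d\mu$ (small by $p$-uniform integrability, since $\mu(\Omega\setminus\Omega_f)\le\eta$) together with a weighted sum $\sum_{A\in\pi}(\mu(A\setminus\Omega_f)/\mu(A))\int_A\|f\|^p\,d\mu$. The self-adjointness identity $\int_E\IE_\pi(g)\,d\mu=\int\IE_\pi(\chi_E)g\,d\mu$ with $E=\Omega\setminus\Omega_f$ and $g=\|f\|^p$ rewrites this weighted sum as $\int_{\Omega\setminus\Omega_f}\IE_\pi(\|f\|^p)\,d\mu$. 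Since conditional expectations preserve $L^1$-uniform integrability (apply Jensen to any de la Vall\'ee-Poussin witness of $\{\|f\|^p\}_{f\in\HH}$), this integral likewise vanishes uniformly as $\eta\to 0$, and condition 2) follows.

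The main obstacle is precisely that last cross term between $\Omega_f\cap A$ and $A\setminus\Omega_f$ inside the same cell: since $\Omega_f$ depends on $f$, the partition $\pi$ cannot be refined uniformly to isolate the exceptional sets. The self-adjointness identity for $\IE_\pi$ is what converts an $f$-dependent weighted sum into an honest integral of a uniformly integrable family over a uniformly small set, bypassing the need for an $f$-dependent refinement.
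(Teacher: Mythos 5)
Your proof is correct, but both halves depart from the paper's argument in ways worth noting. For necessity, the paper obtains ($\FF$) by passing through relative compactness in measure, tightness, a reduction to separable range, and then citing the Fr\'echet theorem of Saadoune--Valadier; your construction --- covering $\HH$ by $\delta$-balls, replacing the centers by simple functions constant on the cells of a common partition, and taking $\Omega_f=\{\omega : \|f(\omega)-\varphi_i(\omega)\|\le\varepsilon\}$ with Markov controlling the exceptional measure --- is elementary, self-contained, and avoids the separability reduction entirely. For sufficiency, the paper estimates $\int_{\Omega_f\cap K}\|f-m_K(f)\|^p$ by inserting the intermediate mean $m_{\Omega_f\cap K}(f)$ and expanding $\|m_{\Omega_f\cap K}(f)-m_K(f)\|$ by hand via H\"older; your symmetric Jensen bound $\|f(\omega)-m_A(f)\|^p\le\frac{1}{\mu(A)}\int_A\|f(\omega')-f(\omega)\|^p\,d\mu(\omega')$ followed by the four-way split of $A\times A$ is cleaner, but the two computations funnel into exactly the same two quantities, $\int_{\Omega\setminus\Omega_f}\|f\|^p$ and $\int_{\Omega\setminus\Omega_f}\IE_\pi(\|f\|^p)$, and your self-adjointness identity is precisely the step the paper performs implicitly when it rewrites $\sum_{K}\mu(K\setminus\Omega_f)\frac{1}{\mu(K)}\int_K\|f\|^p$ as $\int_{\Omega\setminus\Omega_f}\IE_\pi(\|f\|^p)$; both proofs then close by the uniform integrability of the conditional expectations (the paper via Markov and the identity $\int_{\IE_\pi(\|f\|)>M}\IE_\pi(\|f\|)=\int_{\IE_\pi(\|f\|)>M}\|f\|$, you via a de la Vall\'ee-Poussin witness --- both valid). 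The only point you leave tacit is that Condition 2) of Lemma \ref{RieszV} is a limit along the net of partitions directed by refinement, so your estimate must also hold for every refinement of the Fr\'echet partition; it does, since $\EssOsc$ only decreases on subsets and the sets $\Omega_f$ are unchanged, but a sentence to that effect should be added.
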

\begin{proof}In the following we omit the symbol "$d\mu$" under the integral sign and denote simply $\int_A f$ instead of $\int_A fd\mu$. All the integrals are related to the measure $\mu$.
 
$\Rightarrow)$
The  $p$-uniform integrability of $\HH$ is  obvious (results for instance from the obvious  relative weak compactness of the set  $\{\omega\mapsto \|f(\omega)\|^p :f\in \HH\}$ in $L^1(\mu)$). The integral tightness follows from Lemma \ref{RieszV}. Observe now that we can assume without loss of generality that the functions of $\HH$ take their values in a separable complete metric space. Indeed, since $\HH$ is relatively norm compact, it is relatively compact for the topology of convergence in measure. Then it is tight \cite{DiM98}. That is, for every $n\geq 1$, there exists a compact subset $D_n$ of $X$ such that $\mu(\{\omega\in \Omega : f(\omega)\notin D_n\})\leq 1/n$, for every $f\in \HH$. Hence, up to modifying the functions of $\HH$ on a $\mu$-null set, we can assume that all the functions of $\HH$ take their values on the closure of a the $\sigma$-compact set $\underset{n\geq 1}{\cup} D_n$ which is separable and complete. Hence, we can apply the Fr\'echet Theorem (\cite{SaV95}, p. 425) to affirm that $\HH$ satisfies ($\FF$). 

\medskip
$\Leftarrow$) 
We have to prove that Condition 2) of Lemma \ref{RieszV} is satisfied. Then, the conclusion follows from Lemma \ref{RieszV}. 

\medskip
Let $\varepsilon>0$ be fixed. We can assume without loss of generality that $\varepsilon\leq1$. The $p$-uniform integrability of $\HH$ implies its uniform integrability. We know that the uniform integrability of $\HH$ implies the uniform integrability of the set of conditional expectations $\{\IE_\pi(\|f\|): f\in \HH, \pi\in \Pi\}$. For the reader's convenience, by Markov's inequality 
$$\mu(\IE_\pi(\|f\|)>M)\leq \frac{1}{M}\int_\Omega \IE_\pi(\|f\|)=\frac{1}{M}\int_\Omega \|f\|,$$ 
and 
$$\int_{\IE_\pi(\|f\|)>M}\IE_\pi (\|f\|)= \int_{\IE_\pi(\|f\|)>M}\|f\|.$$ 
Hence, $\underset{M\rightarrow +\infty}{\lim}\int_{\IE_\pi(\|f\|)>M}\IE_\pi (\|f\|)=0$ uniformly in $f\in \HH$, provided that $\HH$ is uniformly integrable.

Now, by  Jensen's inequality, since $x\mapsto \|x\|^p$ is convex, $\|\IE_\pi(f)\|^p\leq \IE_\pi(\|f\|^p)$, $\mu$-a.e on $\Omega$.  Therefore, the $p$-uniform integrability of $\HH$ implies the uniform integrability of 
$$\{\|f\|,\|f\|^p,\IE_\pi(\|f\|),\IE_\pi(\|f\|^p),\|\IE_\pi(f)\|^p: f\in \HH,\pi\in \Pi\}.$$  
It results that, there is $\delta\in ]0\;,\varepsilon]$, such that $\mu(E)\leq \delta$ implies  $$\max\left\{\int_E\|f\|,\int_E\|f\|^p,\int_E\IE_\pi(\|f\|),\int_E\IE_\pi(\|f\|^p),\int_E\|\IE_\pi(f)\|^p\right\}\leq \varepsilon, \forall f\in \HH,\forall \pi\in \Pi.$$ 

Using ($\FF$), let $\pi_0\subset \AA^+$ be a partition of $\Omega$, $\Omega_f\in \AA^+,f\in \HH$, such that $\mu(\Omega\setminus \Omega_f)\leq \delta\leq\varepsilon$ and $\EssOsc(f,\Omega_f\cap K)\leq \delta\leq\varepsilon$ for all $f\in \HH$ and $K\in \pi_0$. Clearly, for every $\pi>\pi_0$, $\EssOsc(f,\Omega_f\cap K)\leq \delta\leq\varepsilon$ for all $f\in \HH$ for every $K\in \pi$. Let $\pi>\pi_0$ be fixed and let us denote, for $f\in \HH$, $$\pi_f=\{K\in \pi : \mu(\Omega_f\cap K)>0\}.$$

Then, for all $f\in \HH$, setting $I_\pi=\int_\Omega \|f-\IE_\pi(f)\|^p$, 

$$\begin{array}{rl}
I_\pi&\leq \underset{K\in \pi}{\sum}\int_{\Omega_f\cap K}\|f-m_K(f)\|^p  +2^p\int_{\Omega\setminus \Omega_f}\|f\|^p+\|\IE_\pi(f)\|^p,\\
\\
&\leq 2^p\underset{K\in \pi_f}{\sum}\int_{\Omega_f\cap K}\|f-m_{\Omega_f\cap K} (f)\|^p +2^p\underset{K\in \pi_f}{\sum}\mu(\Omega_f\cap K) \|m_{\Omega_f\cap K}(f)-m_K(f)\|^p+ 2^{p+1}\varepsilon,
\\
\\
&\leq 2^p \mu(\Omega_f).\varepsilon^p +2^p\underset{K\in \pi_f}{\sum}\mu(\Omega_f\cap K)\|(\frac{1}{\mu(\Omega_f\cap K)}-\frac{1}{\mu(K)}) \int_{\Omega_f\cap K}f-\frac{1}{\mu(K)}\int_{K\setminus \Omega_f} f\|^p+2^{p+1}\varepsilon,\\
\\
&\leq 2^p \mu(\Omega_f).\varepsilon^p +2^{2p}\underset{K\in \pi_f}{\sum}\left(\frac{\mu(K\setminus \Omega_f)^p{\mu(\Omega_f\cap K)^{1-p}}}{\mu(K)^p}{\|\int_{\Omega_f\cap K}f\|^p}+\frac{\mu(\Omega_f\cap K)}{\mu(K)^p}\|\int_{K\setminus \Omega_f} f\|^p\right)+2^{p+1}\varepsilon.\\
\end{array}
$$
But, from H\"older's inequality, for every $K\in \pi$, such that $\mu(\Omega_f\cap K)>0$ and $\mu(K\setminus \Omega_f)>0$, 
$$\mu(\Omega_f\cap K)^{1-p}\|\int_{\Omega_f\cap K}f\|^p\leq \int_{\Omega_f\cap K}\|f\|^p\text{ and }\|\int_{K\setminus \Omega_f} f\|^p\leq \mu(K\setminus \Omega_f)^{p-1}\int_{K\setminus \Omega_f}\| f\|^p.$$

Then, 
$$\begin{array}{rl}
I_\pi& \leq 2^p \mu(\Omega_f).\varepsilon^p +2^{2p}\underset{K\in \pi_f,\mu(K\setminus \Omega_f)>0}{\sum}\left(\frac{\mu(K\setminus \Omega_f)^p}{\mu(K)^p}{\int_{\Omega_f\cap K}\|f\|^p}+\right.\\
&\qquad\qquad\qquad\qquad\qquad\qquad\qquad\qquad \qquad\qquad\left.\frac{\mu(\Omega_f\cap K)\mu(K\setminus \Omega_f)^{p-1}}{\mu(K)^p}\int_{K\setminus \Omega_f}\| f\|^p\right)+2^{p+1}\varepsilon,\\
\\
& \leq 2^p \mu(\Omega_f).\varepsilon^p +2^{2p}\underset{K\in \pi_f,\mu(K\setminus \Omega_f)>0}{\sum}\left[\left(\frac{\mu(K\setminus \Omega_f)}{\mu(K)}\right)^{p-1}\frac{\mu(K\setminus \Omega_f)}{\mu(K)}\int_{\Omega_f\cap K}\|f\|^p\right.+\\

&\qquad\qquad\qquad\qquad\qquad\qquad\qquad\qquad \qquad \left.\frac{\max\{\mu(\Omega_f\cap K),\mu(K\setminus \Omega_f)\}^{p}}{\mu(K)^p}\int_{K\setminus \Omega_f}\| f\|^p\right]+2^{p+1}\varepsilon,\\

\\
& \leq 2^p \mu(\Omega_f).\varepsilon^p +2^{2p}\underset{K\in \pi_f}{\sum}\left(\mu(K\setminus \Omega_f)\frac{1}{\mu(K)}\int_{\Omega_f\cap K}\|f\|^p+\int_{K\setminus \Omega_f}\| f\|^p\right)+2^{p+1}\varepsilon,\\
\\
& \leq 2^p \mu(\Omega_f).\varepsilon^p +2^{2p}\underset{K\in \pi_f}{\sum}\left(\mu(K\setminus \Omega_f)\frac{1}{\mu(K)}\int_{K}\|f\|^p+\int_{K\setminus \Omega_f}\| f\|^p\right)+2^{p+1}\varepsilon,\\

\\
&\leq 2^p \mu(\Omega_f).\varepsilon^p +2^{2p}\left(\int_{\Omega\setminus \Omega_f}\IE_\pi(\|f\|^p)+\int_{\Omega\setminus \Omega_f}\| f\|^p\right)+2^{p+1}\varepsilon,\\
\\
&\leq [\mu(\Omega)+2^{p+1}+2]2^p\varepsilon=T\varepsilon.
\end{array}
$$
Where, we used in the last inequality the fact that $\varepsilon \in ]0,1]$ and denoted by $T$ the constant $[\mu(\Omega)+2^{p+1}+2]2^p$ independent from $f$, $\pi$ and $\varepsilon$. Therefore, 
Condition 2) of Lemma \ref{RieszV} follows straightforwardly. 
\end{proof}
\begin{remark}
The proof of the previous Theorem establishes that $p$-uniform integrability and Fr\'echet oscillation restriction condition ($\FF$) implies Condition 2) of Lemma \ref{RieszV}. 

\end{remark}

We end with a comment on tightness and integral tightness. Let us start by observing that integral tightness does not imply tightness. Consider the following~:
\begin{example}Consider the space of absolutely summable sequences $l^1$ endowed with its usual norm and set 
$\Omega=[0,\;1]$ endowed with its Borel $\sigma$-algebra $\AA$ and set $\mu$ to be the Lebesque measure. 
Let $(r_n)_{n\geq 1}$ be the Rademacher sequence on $[0,\;1]$ and $(e_n)_{n\geq 1}$ the canonical basis of $l^1$: $e_n$ is the element of $l^1$, with vanishing terms except the $n^{th}$ one equals to $1$.  Set $\HH=\{r_ne_n :n\geq 1\}\subset L^1(\mu,l^1)$. For every $A\in \AA$, $$\|\int_A r_n e_n d\mu\|=|\int_A r_n d\mu |\|e_n\|=|\int_A r_n d\mu|.$$
 Since, $(r_n)_n$ converges weakly in $L^1(\mu)$ to $0$, the sequence of norms $\|\int_A r_n e_n d\mu\|$ converges to $0$ in $l^1$. Consequently, the set $\{\int_A r_n e_n d\mu: n\geq 1\}$ is relatively norm compact in $l^1$, for every $A\in \AA$. That is $\HH$ is integral tight. However, for $\varepsilon=\frac{1}{4}$, every compact $K$ in $l^1$ cannot contain all the vectors $e_n,n\geq 1$. Hence, for every compact $K$ of $l^1$, there is $e_{n_K}\notin K$. Then, $\mu\{\omega\in \Omega : r_{n_K}e_{n_K}\notin K\}\geq \frac{1}{2}>\varepsilon$. This means that $\HH$ is not tight.

\end{example}

In contrast with the previous example, it is known that tightness together with $p$-uniform integrability imply integral tightness \cite{CAS79,JAL93}.  A straightforward application of this result together with the results above allow to establish a vector valued (separable) version of the well known Vitali's Theorem. 
 
\begin{corollary} Assume that $X$ is separable. A subset $\HH$ of $L^p(\mu,X)$ is relatively norm compact iff it is relatively compact for the convergence in measure topology and $p$-uniformly integrable. 
\end{corollary}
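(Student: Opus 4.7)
The plan is to reduce the corollary to Theorem~\ref{VecV} and to the tightness-style results collected in the paragraph preceding the statement. Both implications are essentially bookkeeping once Theorem~\ref{VecV} is in place.

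For the direct implication, I would argue that relative norm compactness in $L^p(\mu,X)$ automatically entails relative compactness for the topology of convergence in measure, because the canonical identification $L^p(\mu,X)\hookrightarrow L^0(\mu,X)$ is continuous (norm convergence implies convergence in $p$-mean, hence in measure on the finite space $(\Omega,\mu)$). The $p$-uniform integrability is furnished directly by the necessity part of Theorem~\ref{VecV}.

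For the converse, I would check the three hypotheses of Theorem~\ref{VecV}. The $p$-uniform integrability is in the assumption. Since $X$ is separable and $\HH$ is relatively compact in measure, the same argument used in the forward part of Theorem~\ref{VecV} (invoking \cite{DiM98}) yields tightness of $\HH$: for every $n\ge 1$ there is a compact $D_n\subset X$ with $\mu(\{f\notin D_n\})\le 1/n$ uniformly in $f\in\HH$. Modifying the functions on a $\mu$-null set so that they take values in the separable complete $\sigma$-compact closure of $\bigcup_n D_n$, Fr\'echet's theorem (\cite{SaV95}, p.~425) applies and delivers condition~($\FF$). Finally, tightness combined with $p$-uniform integrability implies integral tightness by the result cited just above the corollary (\cite{CAS79,JAL93}). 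With ($\FF$), integral tightness and $p$-uniform integrability in hand, Theorem~\ref{VecV} gives relative norm compactness.

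I do not anticipate a genuine obstacle: the nontrivial inputs — the passage from compactness in measure to tightness, Fr\'echet's theorem, and the tightness $+$ $p$-uniform integrability $\Rightarrow$ integral tightness implication — are all black-boxed from the cited references, and the remainder is a direct invocation of Theorem~\ref{VecV}. The only point that requires a small word of care is the measurable-values reduction before applying Fr\'echet's theorem, but this has already been carried out verbatim inside the proof of Theorem~\ref{VecV} and may be quoted.
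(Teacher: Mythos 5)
Your proposal is correct and follows essentially the same route as the paper: the forward direction is immediate from the continuity of the embedding into the space of measurable functions with the topology of convergence in measure together with the necessity part of Theorem~\ref{VecV}, and the converse deduces tightness from compactness in measure, then integral tightness via \cite{CAS79,JAL93}, obtains ($\FF$) from the Fr\'echet criterion of \cite{SaV95}, and concludes by Theorem~\ref{VecV}. The only cosmetic difference is that you rerun the reduction-to-separable-range step, which is superfluous here since $X$ is assumed separable.
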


\begin{proof}$\Rightarrow)$ is obvious. 

$\Leftarrow)$ Since $\HH$ is relatively compact in measure, it is tight. Together with its $p$-uniform integrability, this implies its integral tightness \cite{CAS79}. The Fr\'echet oscillation $(\FF)$ results as well from the convergence in measure \cite{SaV95}. We conclude the result from Theorem \ref{VecV} .
\end{proof}

\bibliographystyle{plain}

\end{document}